\documentclass[11pt]{article}

\usepackage{graphicx}
\usepackage{subfigure}
\usepackage[T1]{fontenc}
\usepackage[sc]{mathpazo}
\linespread{1.05}         

\usepackage{amsmath}
\usepackage{amsthm,amssymb,amsfonts}
\usepackage{upgreek}
\usepackage[hmargin=1.0in]{geometry}



\begin{document}
\bibliographystyle{plain}
\newcommand{\sgn}{\mathop{\mathrm{sgn}}}

\newcommand{\Sec}[1]{Section~\ref{sect:#1}}
\newtheorem{dfn}{Definition}[section]
\newtheorem{thm}{Theorem}[section]
\newtheorem{lem}[thm]{Lemma}
\newtheorem{cor}[thm]{Corollary}
\theoremstyle{remark}
\newtheorem{rem}{Remark}

\newcommand{\sspcoef}{\mathcal{C}}
\newcommand{\tc}{\tilde{\sspcoef}}
\newcommand{\tp}{\tilde{p}}
\newcommand{\m}[1]{\mathbf{#1}}
\newcommand{\mA}{\m{A}}
\newcommand{\mtA}{\tilde{\mA}}
\newcommand{\matalpha}{\boldsymbol{\upalpha}}
\newcommand{\matalphat}{\tilde{\boldsymbol{\upalpha}}}
\newcommand{\matbeta}{\boldsymbol{\upbeta}}
\newcommand{\bv}{\mathbf{v}}
\newcommand{\bu}{\mathbf{u}}
\newcommand{\Dx}{\Delta x}
\newcommand{\tF}{\tilde{F}}

\newcommand{\mL}{\m{L}}
\newcommand{\mI}{\m{I}}
\newcommand{\tL}{\tilde{\mL}}
\newcommand{\tz}{\tilde{z}}
\newcommand{\tR}{\tilde{R}}
\newcommand{\matgamma}{\boldsymbol{\upgamma}}

\newcommand{\be}{\begin{equation}}
\newcommand{\ee}{\end{equation}}
\newcommand{\Dt}{\Delta t}
\newcommand{\transpose}{^\mathrm{T}}
\newcommand{\dd}[2]{\frac{\partial #1}{\partial #2}}

\newcommand{\tlambda}{\tilde{\lambda}}
\newcommand{\ta}{\tilde{a}}
\newcommand{\tA}{\tilde{\m{A}}}
\newcommand{\tb}{\tilde{b}}
\newcommand{\Oop}{{\cal O}}
\newcommand{\tbeta}{\tilde{\beta}}
\newcommand{\DtFE}{\Dt_{\textup{FE}}}

\title{Step Sizes for Strong Stability Preservation with Downwind-biased
Operators}
\author{
  David I. Ketcheson\thanks{King Abdullah University of Science and Technology,
    Box 4700, Thuwal, Saudi Arabia, 23955-6900.
   (\mbox{david.ketcheson@kaust.edu.sa}).  This publication is based on work
   supported by award No. FIC/2010/05 - 2000000231, made by King Abdullah
   University of Science and Technology (KAUST).}}
\date{}

\maketitle

\abstract{
Strong stability preserving (SSP) integrators for initial value ODEs
preserve temporal monotonicity 
solution properties in arbitrary norms.  All existing SSP methods, including
implicit methods, either
require small step sizes or achieve only first order accuracy.
It is possible to achieve more relaxed step size restrictions in the
discretization of hyperbolic PDEs through the use of both upwind- and
downwind-biased semi-discretizations.
We investigate bounds on the maximum SSP step size 
for methods that include negative coefficients 
and downwind-biased semi-discretizations.
We prove that the downwind SSP coefficient for linear multistep methods
of order greater than one is at most equal to two, while
the downwind SSP coefficient for explicit Runge--Kutta
methods is at most equal to the number of stages of the method.
In contrast, the maximal
downwind SSP coefficient for second order Runge--Kutta methods is shown to be
unbounded.  We present a class of such methods with arbitrarily large SSP
coefficient and demonstrate that they achieve second order accuracy
for large CFL number.
}

\section{Introduction}
  \subsection{Strong stability preservation}
This work is concerned with numerical methods for the initial value problem
\begin{align} \label{ivp}
u'(t) & = F(u) & u(0)=u_0,
\end{align}
where $u\in\Re^n$ and $F : \Re^n \to \Re^n$.
Numerical methods for \eqref{ivp} compute a sequence of solutions
$u^1,u^2,\dots$ approximating the true solution at times $\Dt,2\Dt,\dots$.
We are interested in numerical solutions that satisfy the monotonicity 
property
\begin{align} \label{onestep-monotonicity}
\|u^{n}\| \le \|u^{n-1}\|
\end{align}
in the case of a one-step method, or more generally
\begin{align} \label{monotonicity}
\|u^{n}\| \le \max\left\{\|u^{n-1}\|,\|u^{n-2}\|,\dots,\|u^{n-k}\|\right\}
\end{align}
for a $k$-step method.  

A common approach is to assume that the initial value problem \eqref{ivp}
is monotone under
forward Euler integration, subject to some step size restriction:
\begin{align} \label{FEcond}
\|u + \Dt F(u)\| \le \|u\| \mbox{ for all } u, \mbox{ for } 0\le\Dt\le\DtFE.
\end{align}
The term {\em strong stability preserving} (SSP) is used to denote any method
that gives a solution satisfying the monotonicity condition \eqref{monotonicity} 
whenever applied to a initial value problem \eqref{ivp} satisfying the forward Euler
condition \eqref{FEcond}.  This can be shown to hold under a step size
restriction of the form
\begin{align}
\Dt \le \sspcoef \DtFE,
\end{align}
where the factor $\sspcoef$, referred to as the {\em SSP coefficient},
depends only on the numerical method.  For a recent review of SSP methods
see \cite{gottlieb2009}.

The SSP coefficient is, for most known methods, not very large; for 
a broad class of explicit general linear methods it is never greater than 
the number of stages of the method \cite{ketcheson2009a}, and for many 
classes of implicit methods it is known or 
conjectured to be no greater than twice the number of stages 
\cite{hundsdorfer2005,ferracina2008,ketcheson2009}.
No known methods have $\sspcoef>2s$ and exhibit higher than first
order convergence for large CFL numbers.

\subsection{Downwinding}
The study of SSP methods has been motivated by the numerical solution of
hyperbolic conservation laws; in one dimension these take the form
\begin{align} \label{conslaw}
U_t + f(U)_x = 0.
\end{align}
Semi-discretization of the conservation law \eqref{conslaw} leads to the
initial value problem \eqref{ivp}, where
$u$ is a finite-dimensional approximation of $U$ and 
$F(u)$ is an approximation to $-f(U)_x$.

The solution of the scalar conservation law \eqref{conslaw} has the property
that its total variation does not increase in time.  Hence it is
desirable for a numerical discretization to satisfy \eqref{monotonicity}
with respect to the total variation semi-norm; such schemes are said to be
total variation diminishing (TVD).  A common approach to development of
TVD methods is to employ
a semi-discretization for which the TVD property holds under forward
Euler integration in time, up to some maximal time step size; this
is just condition \eqref{FEcond}, where $\|\cdot\|$ is taken to be
the total variation semi-norm.

Such semi-discretizations generally are upwind-biased.  By considering
corresponding downwind-biased semi-discretizations, one arrives at
an operator $\tF$ that approximates $f(U)_x$ and satisfies
\begin{align} \label{dwFEcond}
\|u + \Dt \tF(u)\| \le \|u\| \mbox{ for all } \Dt\le\DtFE.
\end{align}

For example, consider the advection equation
\begin{align} \label{eq:advect}
U_t + U_x & = 0
\end{align}
discretized via the upwind and downwind discretizations:
\begin{align} \label{eq:advupdown}
u_i'(t) & = -\frac{u_i-u_{i-1}}{\Dx} = F_i(u) & u_i'(t) & =  -\frac{u_{i+1} - u_i}{\Dx} = -\tF_i(u).
\end{align}
Here $u_i(t)$ is an approximation to $U(i\Dx,t)$.
In this case the conditions \eqref{FEcond}, \eqref{dwFEcond} hold with
$\DtFE=\Dx$, corresponding to the usual CFL condition.

We say a method is strong stability preserving with {\em downwind
SSP coefficient} $\tc$ if the method
gives a solution satisfying \eqref{monotonicity} whenever 
applied to a system satisfying the forward Euler conditions \eqref{FEcond} 
and \eqref{dwFEcond} under the time step restriction
\begin{align}
\Dt \le \tc \DtFE.
\end{align}

The idea of using downwinding to achieve the TVD property under larger
timesteps for explicit time discretizations was originally introduced 
by Shu and Osher \cite{shu1988,shu1988b}.  These methods are frequently
referred to as {\em methods with downwind biased discretizations}; in the
present work we will refer to them simply as {\em downwind methods} for 
brevity, with the understanding that they incorporate both upwind- and
downwind-biased discretizations.
Optimal explicit downwind Runge--Kutta methods of up to fifth order and ten 
stages were developed in
\cite{ruuth2004}.  Further optimal explicit Runge--Kutta and linear multistep
schemes, along with an approach to
efficient implementation of upwind and downwind WENO discretizations, are given
in \cite{gottlieb2006}.  A theory of necessary and sufficient
conditions for Runge-Kutta methods with downwinding to be SSP was developed in
\cite{higueras2005a}.

The principal reason for studying downwind methods is that the downwind 
SSP coefficient $\tc$ is not as restricted as the SSP coefficient
$\sspcoef$; for instance, explicit Runge-Kutta methods can have order
greater than four and $\tc>0$ \cite{ruuth2004}.  In the present work, we 
consider also {\em implicit} downwind methods.  Generally speaking, implicit 
numerical
methods are used in order to allow the use of timesteps based solely on 
accuracy considerations and not on stability.  
As discussed above, implicit SSP methods require step sizes not much larger
than those allowed by explicit methods.  The principal aim of the present
is to determine whether implicit downwind SSP methods allow much larger step 
sizes.

\subsection{Downwind Methods} \label{sect:add}
Downwind Runge-Kutta methods take the form
    \begin{subequations} \label{eq:add-runge-kutta} \begin{align}
    \label{eq:add-RKstages}
    y_i &= u^{n-1} + \Dt \sum_{j=1}^s a_{ij} F \left(t^{n-1} + c_j \Dt, 
    y_j \right)+\Dt \sum_{j=1}^s \ta_{ij} \tF \left(t^{n-1} + c_j \Dt, 
    y_j \right), \ \ \ 1 \leq i \leq s \\
    u^{n} & = u^{n-1} + \Dt \sum_{j=1}^s b_{j} F \left(t^{n-1} + c_j 
    \Dt, y_j \right) + \Dt \sum_{j=1}^s \tb_{j} \tF \left(t^{n-1} + c_j 
    \Dt, y_j \right).
    \end{align} \end{subequations}
The method is explicit if each stage depends only on previous stages;
i.e., if $a_{ij}=0,\ta_{ij}=0$ for $j\ge i$.

Downwind linear multistep methods take the form
\be \label{eq:add_lmm_dw}
u_n - \Dt\beta_k F(u_n) - \Dt\tbeta_k \tF(u_n) = 
  \sum_{j=0}^{k-1} \alpha_j u_{n-k+j}+ \Dt \beta_j F(u_{n-k+j}) + \Dt\tbeta_j\tF(u_{n-k+j}).
\ee
The method is explicit if $\beta_k=\tbeta_k=0$.

%

\section{Summary of main results}
  In this section we present the main results of the paper.
The proofs are deferred to later sections.

Our first result concerns explicit downwind Runge--Kutta methods.  It is
known that the SSP coefficient $\sspcoef$ for a broad class of general 
linear methods (without
downwinding) cannot be greater than the number of stages \cite{ketcheson2009a}.
It turns out that the same bound holds for explicit downwind Runge--Kutta methods.

\begin{thm} \label{dwrmaxthm}
The downwind SSP coefficient $\tc$ of any first order accurate explicit 
downwind Runge--Kutta method is no greater than the number of stages of the method.
\end{thm}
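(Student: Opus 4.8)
The plan is to reduce the question to a linear stability analysis and then exploit the accuracy and explicitness constraints through a short counting argument. First I would apply the method to the family of scalar linear problems $F(u) = \lambda u$, $\tF(u) = \tlambda u$, treating $\lambda$ and $\tlambda$ as \emph{independent} nonpositive parameters. This is legitimate because the SSP property must hold for \emph{every} pair of operators satisfying \eqref{FEcond} and \eqref{dwFEcond}, and those two forward Euler conditions impose no joint constraint tying $F$ to $\tF$. Writing $z = \Dt\lambda$ and $\tz = \Dt\tlambda$, the one-step map is multiplication by the stability function
\[ \psi(z,\tz) = 1 + (z\, b\transpose + \tz\, \tb\transpose)(\mI - z\mA - \tz\tA)^{-1} e, \]
where $e$ is the vector of ones. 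Because the method is explicit, $\mA$ and $\tA$ are strictly lower triangular, so $(\mI - z\mA - \tz\tA)^{-1}$ is a polynomial of total degree at most $s-1$ in $(z,\tz)$ and hence $\psi$ is a polynomial of total degree at most $s$.

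The crucial and hardest step is to extract a necessary condition from the SSP hypothesis. Setting $r = \tc$ and $\Dt = r\DtFE$, I would argue that the true downwind SSP coefficient is bounded above by the \emph{linear} downwind SSP coefficient, and that the latter, by the absolute monotonicity characterization of SSP (building on \cite{higueras2005a} and the standard reduction from nonlinear to linear monotonicity in arbitrary norms), equals the largest $r$ for which $\psi$ admits a representation with nonnegative coefficients in the forward Euler building blocks $1 + z/r$ and $1 + \tz/r$:
\[ \psi(z,\tz) = \sum_{j+k \le s} c_{jk}\,\Bigl(1 + \tfrac{z}{r}\Bigr)^{j}\Bigl(1 + \tfrac{\tz}{r}\Bigr)^{k}, \qquad c_{jk} \ge 0. \]
Here the total-degree restriction $j+k\le s$ is inherited from the previous paragraph, and evaluating at $z=\tz=0$ gives $\sum_{j,k} c_{jk} = \psi(0,0) = 1$. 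The main obstacle is establishing this nonnegative representation rigorously: one must show that the two forward Euler maps, acting under the downwind sign convention, generate precisely the absolute monotonicity cone, so that monotonicity preservation in all norms is equivalent to $c_{jk}\ge 0$.

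Finally I would bring in first order accuracy. Since $\tF$ approximates the negative of $F$ at the continuous level, consistency forces $\sum_j b_j - \sum_j \tb_j = 1$, which in terms of the stability function reads $\partial_z\psi(0,0) - \partial_{\tz}\psi(0,0) = 1$. Differentiating the nonnegative representation at the origin converts this into $\tfrac1r\sum_{j,k}(j-k)c_{jk} = 1$, i.e. $\sum_{j,k}(j-k)c_{jk} = r$. Since $j-k\le j\le j+k\le s$ while $c_{jk}\ge 0$ and $\sum_{j,k}c_{jk}=1$, we obtain
\[ r = \sum_{j,k}(j-k)\,c_{jk} \;\le\; s \sum_{j,k} c_{jk} = s, \]
so $\tc = r \le s$. (Equality forces $c_{s,0}=1$ with all other $c_{jk}=0$, giving $\psi(z,\tz)=(1+z/s)^s$, which shows incidentally that downwinding cannot help a first order method exceed the $s$-stage barrier.)
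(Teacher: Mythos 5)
Your overall architecture matches the paper's: restrict to linear problems, expand the stability function in the basis $\bigl(1+\tfrac{z}{r}\bigr)^{j}\bigl(1+\tfrac{\tz}{r}\bigr)^{k}$ with $j+k\le s$ (which is exactly the paper's basis in \eqref{dwtaylor1} under the reindexing $\gamma_{jl}=c_{j-l,l}$, so that the paper's $j-2l$ is your $j-k$), and then play the consistency condition $\sum c_{jk}=1$ against the first-order condition $\sum (j-k)c_{jk}=r$ to get $r\le\max(j-k)=s$. That endgame is identical to the paper's, and your total-degree bound via nilpotency of $z\mA+\tz\tA$ is correct.

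The genuine gap is the step you yourself flag as ``the main obstacle'': the claim that $\tc\ge r$ forces the coefficients $c_{jk}$ in that basis to be nonnegative. You propose to obtain this from an absolute-monotonicity characterization of the two-variable stability polynomial, i.e.\ that monotonicity in all norms for all linear pairs satisfying \eqref{FEcond} and \eqref{dwFEcond} is equivalent to $c_{jk}\ge 0$. The direction you actually need (monotonicity $\Rightarrow$ nonnegativity) is the hard one: it requires constructing, for each hypothetical negative coefficient, a pair of operators $(\mL,\tL)$ and a seminorm witnessing a violation, and this is not a routine extension of the classical one-operator threshold-factor argument; you leave it entirely unproved. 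The paper avoids this difficulty by working at the level of the method coefficients rather than the stability polynomial: its Lemma (a corollary of Spijker's Theorems 3.5 and 3.6) states that $\tc\ge r$ if and only if the method admits the downwind Shu--Osher form \eqref{dwrk_sspform} with $p_{ij},\tp_{ij}\ge0$, and once that is granted the nonnegativity of the $\gamma_{jl}$ in \eqref{dwtaylor1} follows by simply unrolling the explicit recursion, since sums and products of nonnegative numbers are nonnegative. If you replace your unproven polynomial-level characterization with that method-level representation (or actually prove the polynomial-level statement, which would be a new lemma in its own right), the rest of your argument goes through verbatim.
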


It is already known (see \cite{ketcheson2009a}) that the bound
$\tc\le1$ holds for explicit linear multistep methods.
Together with the theorem above, this implies that not too much can be 
gained by using downwinding in explicit methods, at least in an asymptotic
sense.  Hence we consider implicit downwind
methods.  For implicit linear multistep methods of greater than first order
accuracy (without downwinding),
it is known that the SSP coefficient cannot be greater than two.
It turns out that the same bound holds for implicit downwind linear
multistep methods.

\begin{thm} \label{thm:dwlmm}
The downwind SSP coefficient of any second order accurate
linear multistep method is at most two.
\end{thm}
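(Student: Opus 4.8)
The plan is to reduce the statement to the order-two conditions by way of a coefficient characterization of $\tc$. I would first establish that the downwind linear multistep method \eqref{eq:add_lmm_dw} has $\tc>0$ only if $\alpha_j\ge0$, $\beta_j\ge0$ and $\tbeta_j\ge0$ for all $j$, and that in this case
\be
\tc=\min_{0\le j\le k-1}\frac{\alpha_j}{\beta_j+\tbeta_j},
\ee
a ratio with zero denominator being read as $+\infty$. The lower bound ``$\ge$'' is a convex-combination argument: with $\Dt=r\,\DtFE$, the contribution $\alpha_j u_{n-k+j}+\Dt\beta_j F(u_{n-k+j})+\Dt\tbeta_j\tF(u_{n-k+j})$ is rewritten as $r\beta_j$ copies of the Euler step $u_{n-k+j}+\DtFE F(u_{n-k+j})$, $r\tbeta_j$ copies of $u_{n-k+j}+\DtFE\tF(u_{n-k+j})$, and a nonnegative remainder $(\alpha_j-r(\beta_j+\tbeta_j))u_{n-k+j}$, while the implicit stage is absorbed as in the backward-Euler estimate; monotonicity \eqref{monotonicity} then follows from \eqref{FEcond} and \eqref{dwFEcond}. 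The reverse bound, together with the necessity of the sign conditions, is the delicate part, and I expect it to be the main obstacle: it calls for linear test operators $F,\tF$ obeying the forward-Euler conditions that violate monotonicity the instant a sign condition fails or $r$ exceeds the minimum above.

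The key structural fact is that, once $\beta_j,\tbeta_j\ge0$, the effective derivative weight $b_j:=\beta_j-\tbeta_j$ (obtained by regarding $-\tF$ as the same consistent derivative approximation realized by $F$) and the monotonicity weight $s_j:=\beta_j+\tbeta_j$ satisfy $|b_j|\le s_j$. Accuracy is governed by $b_j$ alone: order two is equivalent to the standard linear-multistep order conditions in $\alpha_j$ and $b_j$. Re-indexing by the lag $i=k-j$ and writing $A_i=\alpha_{k-i}$, $D_i=b_{k-i}$ and $S_i=s_{k-i}$, exactness for $u(t)=1,t,t^2$ becomes
\be
\sum_{i=1}^k A_i=1,\qquad \sum_{i=1}^k i\,A_i=\sum_{i=0}^k D_i,\qquad \sum_{i=1}^k i^2 A_i=2\sum_{i=1}^k i\,D_i,
\ee
where in the second-moment identity the implicit ($i=0$) term has dropped out because of its factor $i$.

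Finally I argue by contradiction. Suppose $\tc>2$; then all sign conditions hold, every $A_i\ge0$, and $2S_i<A_i$ at each lag with $S_i>0$ (while $S_i=0$ forces $D_i=0$, since $|D_i|\le S_i$). The second-moment identity and $D_i\le S_i$ give
\be
\sum_{i=1}^k i^2 A_i=2\sum_{i=1}^k i\,D_i\le 2\sum_{i=1}^k i\,S_i.
\ee
If every $S_i$ vanishes, the right-hand side is $0$, forcing all $A_i=0$ and contradicting $\sum_i A_i=1$. Otherwise $2\sum_i i\,S_i<\sum_i i\,A_i$, so $\sum_i i^2A_i<\sum_i i\,A_i$, i.e.\ $\sum_{i=1}^k i(i-1)A_i<0$ --- impossible, since $i(i-1)\ge0$ and $A_i\ge0$ for every $i\ge1$. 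Hence $\tc\le2$. The bound is sharp: the threshold $2$ is precisely what makes $i-1\ge0$ at every admissible lag, and the trapezoidal rule attains $\tc=2$ with $\sum_i i(i-1)A_i=0$.
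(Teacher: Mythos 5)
Your argument is correct, and its engine is the same as the paper's: the identity you derive from the order conditions, $\sum_{i=1}^k i^2 A_i = 2\sum_{i=1}^k i D_i$ in your lag variables, is exactly what the paper obtains by taking the combination $-k^2\times\eqref{lmmoc0}+2k\times\eqref{lmmoc1}-\eqref{lmmoc2}$ (note in particular that the implicit coefficients cancel in both versions, so neither proof needs sign information about $\beta_k,\tbeta_k$). The one genuine structural difference is that you dispense with Lemma~\ref{betalem}: the paper first reduces to \emph{optimal} methods satisfying $\beta_j\tbeta_j=0$ so that $|\beta_j-\tbeta_j|=\beta_j+\tbeta_j$, whereas you only ever use the one-sided inequality $\beta_j-\tbeta_j\le\beta_j+\tbeta_j$, which holds for any nonnegative $\beta_j,\tbeta_j$; combined with $\alpha_j\ge \tc(\beta_j+\tbeta_j)$ this yields $\sum_i i(i-1)A_i<0$ directly, without any appeal to optimality. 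That is a small but real simplification, and your handling of the degenerate case (all $S_i=0$ for $i\ge1$) is a point the paper's phrasing glosses over. Regarding the characterization of $\tc$ that you flag as the ``delicate part'': the paper simply takes the equivalence \eqref{eq:x} as given (it is standard in the SSP literature), and only the direction ``$\tc\ge r$ implies the coefficient inequalities'' is used in the bound, so you need not prove the converse or construct violating test operators for this theorem.
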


Given the negative nature of the two theorems above, one might expect that
bounds on the SSP coefficient hold for the downwind SSP coefficient 
for all classes of methods.  It seems, however, that this is not the case;
for implicit Runge-Kutta methods
it is conjectured that the SSP coefficient can be no greater than $2s$,
where $s$ is the number of stages.  However, it turns out that implicit
downwind Runge-Kutta methods can have arbitrarily large SSP coefficient.

\begin{thm} \label{thm:dwirk}
For any positive finite $r$, there exists a two-stage, second-order 
downwind Runge-Kutta method with downwind SSP coefficient equal to $r$.
\end{thm}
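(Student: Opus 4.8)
The plan is to exhibit, for each target value $r>0$, an explicit two-stage method whose downwind SSP coefficient can be computed in closed form and shown to equal $r$. I would begin by writing the general two-stage method with coefficient arrays $\m{A}=(a_{ij})$, $\tA=(\ta_{ij})$, weights $(b_j)$, $(\tb_j)$ and abscissae $c_j$, and recording the accuracy requirements. Since the upwind and downwind semi-discretizations are consistent approximations of the same flux with opposite sign, in the smooth limit $\tF=-F$, so the order of the method is that of the reduced Runge--Kutta method with Butcher array $\m{A}-\tA$ and weights $b-\tb$; imposing first- and second-order accuracy therefore yields the two standard conditions on $(\m{A}-\tA,\,b-\tb,\,c)$. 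These two conditions, together with the positivity requirements below, are all I must respect while driving the coefficient up to $r$.

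Next I would set up the monotonicity (Shu--Osher) characterization of $\tc$ for downwind methods (cf.\ \cite{higueras2005a}): the method is SSP with downwind coefficient at least $r$ precisely when each stage can be written as a convex combination of $u^{n-1}$ and forward-Euler sub-steps $y_j+\tau\Dt F(y_j)$ and $y_j+\tau\Dt\tF(y_j)$, with nonnegative weights and with $\tau\Dt\le\DtFE$ for every explicit sub-step when $\Dt=r\DtFE$. The decisive observation is that an implicit (diagonal) sub-step is unconditionally monotone under \eqref{FEcond}: if $w=z+\mu\Dt F(w)$, then writing $w=\tfrac{1}{1+\theta}z+\tfrac{\theta}{1+\theta}\bigl(w+\DtFE F(w)\bigr)$ with $\theta=\mu\Dt/\DtFE$ gives $\|w\|\le\|z\|$ for every $\mu>0$, and similarly for $\tF$ under \eqref{dwFEcond}. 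Hence self-coupling terms place no ceiling on $r$; only the explicit couplings between distinct stages do, and it is these that the construction must render negligible relative to $\Dt=r\DtFE$.

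The construction then exploits a gauge freedom: replacing $\m{A}\mapsto\m{A}+S$, $\tA\mapsto\tA+S$ (and correspondingly $b,\tb$) by any common nonnegative $S$ leaves $\m{A}-\tA$ and $b-\tb$ --- hence the order --- untouched while enlarging the implicit, unconditionally-monotone part of the scheme. I would fix a base reduced tableau satisfying the two order conditions (the representation drives the output weights toward a subconvex vector $\lambda\ge0$ with $\lambda\transpose c=1$, so at least one abscissa must exceed $1$) and then choose $S$ growing with $r$ so that, measured against $\Dt=r\DtFE$, every explicit inter-stage coupling is pushed down to size exactly $\DtFE$ at $\Dt=r\DtFE$. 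Verifying that all weights in the resulting representation are nonnegative yields $\tc\ge r$; the heavy implicitness together with the fixed difference $\m{A}-\tA$ forces $\m{A}$ and $\tA$ to carry large entries of opposite character, which is precisely what downwinding supplies.

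The main obstacle I anticipate is the matching upper bound $\tc\le r$, since the lower bound is a direct (if lengthy) verification once the family is written down. For this I would compute the radius of absolute monotonicity of the family in closed form, or equivalently construct a single initial value problem satisfying \eqref{FEcond} and \eqref{dwFEcond} --- for instance a two-component linear system realizing the extremal forward-Euler inequalities with equality --- on which the representation degenerates and monotonicity fails for every $\Dt>r\DtFE$. A secondary check, needed so that the statement is not vacuous, is that the family is genuinely (not merely formally) second order and retains accuracy at large CFL number; this I would confirm by verifying that the two order conditions hold uniformly in $r$ and, as indicated in the abstract, by a numerical convergence study.
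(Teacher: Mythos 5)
Your framework is the right one --- the Shu--Osher/Spijker representation \eqref{dwrk_sspform}, order conditions imposed on the underlying method with tableau $\mA-\tA$, $b-\tb$, and the observation that diagonal implicitness is the only possible source of unbounded $\tc$ --- and your remark that the output row forces $\lambda\transpose c\approx 1$ with $\lambda$ subconvex is consistent with the family the paper actually finds (which has $c_2=1-1/r$). But the proposal never produces a method, and the mechanism you put at its center is the step that fails. The paper's proof is a bare-hands construction: it exhibits the one-parameter family \eqref{eq:optdwrk} already written in the form \eqref{dwrk_sspform} with radius $r$, so that $\tc\ge r$ reduces to checking that $\tfrac{2}{r(r-2)}$, $\tfrac{2}{r}$, $\tfrac{r^2-4r+2}{r(r-2)}$ are nonnegative and sum to one (true for $r\ge 2+\sqrt{2}$), together with the two order conditions. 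Nothing in your outline gets you to such a family.

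The concrete gap is the ``gauge freedom'' step. Writing $\mA=S+d^{+}$, $\tA=S+d^{-}$ with $d=\mA-\tA$ fixed and letting the common nonnegative part $S$ grow with $r$ is not what makes large $\tc$ possible, and there is no argument that any such $S$ keeps the coefficients in \eqref{dwrk_sspform} nonnegative as $r\to\infty$. Indeed the analogue of Lemma \ref{betalem} points the other way: a common positive part of the upwind and downwind arrays enters the monotonicity constraints through sums like $\beta_j+\tbeta_j$ and is therefore wasteful, and the paper's optimal family \eqref{pbc} has $a_{ij}\ta_{ij}=0$ in every stage entry, with the common part $\min(\mA,\tA)$ tending to \emph{zero} as $r\to\infty$; what grows is the genuinely downwind entry $\ta_{12}=\ta_{22}=\tfrac{r^2-4r+2}{2r}$, not a shared implicit block. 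Relatedly, the heuristic ``self-coupling terms place no ceiling on $r$, only explicit inter-stage couplings do'' cannot be the whole argument: implicit Runge--Kutta methods of order two without downwinding have all their diagonal implicitness available and still satisfy $\sspcoef\le 2s$ (conjecturally), so the ceiling comes from the interaction of the order conditions with \emph{all} the nonnegativity constraints, and only an explicit family with verified coefficients settles the question. Your concern about the matching upper bound $\tc\le r$ is legitimate (the paper itself only makes $\tc\ge r$ apparent from the representation), but it is secondary to actually exhibiting the family.
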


The rest of the paper proceeds as follows.  The proof of Theorem
\ref{dwrmaxthm} is presented in section \ref{sect:dwerk}.  The proof
of Theorem \ref{thm:dwlmm} is presented in \ref{sect:dwilmm}.
In Section \ref{sect:dwirk}, we prove Theorem \ref{thm:dwirk} by
construction and analyze the resulting methods.  Some final remarks
on these results are provided in Section \ref{sect:discuss}.

\section{Explicit Runge--Kutta methods\label{sect:dwerk}}
  In this section, we prove Theorem \ref{dwrmaxthm}, which bounds the downwind
SSP coefficient for explicit downwind Runge--Kutta methods.  SSP methods
are most conveniently analyzed by writing them in a certain Shu-Osher form;
see \cite{spijker2007,ketchesonphdthesis}.  The corresponding form for for
downwind methods is introduced in the following Lemma, which is an easy
corollary of Theorems 3.5 and 3.6 of \cite{spijker2007}.
\begin{lem}
The SSP coefficient of the downwind Runge-Kutta method 
\eqref{eq:add-runge-kutta} is the largest $r\ge0$ such that the method 
can be written in the form
\begin{subequations}\label{dwrk_sspform}
\begin{align} 
y_i & = u^{n-1} 
         + \sum_{j=1}^{s} p_{ij} \left(y_j + \frac{\Dt}{r}F(y_j)\right)
         + \sum_{j=1}^{s} \tp_{ij} \left(y_j + \frac{\Dt}{r}\tF(y_j)\right) \\
u^{n} & = u^{n-1} 
         + \sum_{j=1}^{s} p_{s+1,j} \left(y_j + \frac{\Dt}{r}F(y_j)\right)
         + \sum_{j=1}^{s} \tp_{s+1,j} \left(y_j + \frac{\Dt}{r}\tF(y_j)\right)
\end{align}
\end{subequations}
with all coefficients non-negative:
\begin{align} \label{poscoeff}
p_{ij},\tp_{ij}\ge 0.
\end{align}
\end{lem}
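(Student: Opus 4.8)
The plan is to relate the downwind Runge--Kutta method to results already available for methods without downwinding, by treating the upwind and downwind operators as two separate "forward Euler building blocks." The statement to prove is the Lemma characterizing the SSP coefficient as the largest $r$ for which the method admits the form \eqref{dwrk_sspform} with nonnegative coefficients. I would begin by collecting the stage vectors and the update into a single augmented system, writing $Y = (y_1,\dots,y_s,u^n)\transpose$ and expressing \eqref{eq:add-runge-kutta} in the compact matrix form $Y = \mathbf{e}\,u^{n-1} + \Dt\,\mathbf{S}\,F(Y) + \Dt\,\tilde{\mathbf{S}}\,\tF(Y)$, where $\mathbf{e}$ is the all-ones vector and $\mathbf{S},\tilde{\mathbf{S}}$ are the $(s+1)\times s$ matrices built from the $a_{ij},b_j$ and $\ta_{ij},\tb_j$ coefficients respectively.

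Next I would rewrite the target form \eqref{dwrk_sspform} in the same matrix notation. Grouping the coefficients into matrices $\mathbf{P}=(p_{ij})$ and $\tilde{\mathbf{P}}=(\tp_{ij})$, the form becomes $Y = \mathbf{e}\,u^{n-1} + \mathbf{P}\bigl(Y + \tfrac{\Dt}{r}F(Y)\bigr) + \tilde{\mathbf{P}}\bigl(Y + \tfrac{\Dt}{r}\tF(Y)\bigr)$. Rearranging, this says $(\mathbf{I} - \mathbf{P} - \tilde{\mathbf{P}})Y = \mathbf{e}\,u^{n-1} + \tfrac{\Dt}{r}\mathbf{P}F(Y) + \tfrac{\Dt}{r}\tilde{\mathbf{P}}\tF(Y)$. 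Matching this against the compact form of the method gives the algebraic correspondence between $(\mathbf{S},\tilde{\mathbf{S}})$ and $(\mathbf{P},\tilde{\mathbf{P}})$: the condition $\mathbf{e} = (\mathbf{I}-\mathbf{P}-\tilde{\mathbf{P}})\mathbf{e}$ must hold (so row sums of $\mathbf{P}+\tilde{\mathbf{P}}$ are consistent with the consistency/preconsistency of the method), together with $\mathbf{S} = \tfrac{1}{r}(\mathbf{I}-\mathbf{P}-\tilde{\mathbf{P}})^{-1}\mathbf{P}$ and analogously $\tilde{\mathbf{S}} = \tfrac{1}{r}(\mathbf{I}-\mathbf{P}-\tilde{\mathbf{P}})^{-1}\tilde{\mathbf{P}}$, after absorbing the $\mathbf{e}$-column appropriately. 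The heart of the argument is then an SSP estimate: I would apply the forward Euler hypotheses \eqref{FEcond} and \eqref{dwFEcond}, together with nonnegativity \eqref{poscoeff} and the convexity encoded by the row sums, to show by the standard Shu--Osher--type telescoping bound that whenever the method has such a representation with $\Dt \le r\,\DtFE$, the monotonicity \eqref{monotonicity} follows; conversely, I would argue that any $r$ for which monotonicity holds must admit such a representation, invoking Theorems 3.5 and 3.6 of \cite{spijker2007}.

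Since the excerpt explicitly flags this Lemma as "an easy corollary of Theorems 3.5 and 3.6 of \cite{spijker2007}," the cleanest route is not to reprove the SSP theory from scratch but to verify that the downwind system \eqref{eq:add-runge-kutta} fits into the abstract framework of that reference. Concretely, I would treat the pair $(F,\tF)$ as defining a single enlarged operator: one regards the method as a general linear method acting on the doubled set of stage evaluations, where each stage contributes both an $F$-evaluation (constrained by \eqref{FEcond}) and a $\tF$-evaluation (constrained by \eqref{dwFEcond}). The Spijker framework characterizes the SSP coefficient as the largest $r$ such that the coefficient matrix can be split as an $r$-scaled combination of nonnegative "base" matrices satisfying the absolute monotonicity / positivity conditions; specializing that characterization to the present two-operator splitting yields exactly \eqref{dwrk_sspform} with \eqref{poscoeff}.

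The main obstacle I anticipate is bookkeeping rather than genuine difficulty: one must ensure that the two families of nonnegative coefficients $p_{ij}$ and $\tp_{ij}$ can be simultaneously extracted while sharing the common matrix factor $(\mathbf{I}-\mathbf{P}-\tilde{\mathbf{P}})^{-1}$, and that the required invertibility of $\mathbf{I}-\mathbf{P}-\tilde{\mathbf{P}}$ holds. The subtle point is that the constraints \eqref{FEcond} and \eqref{dwFEcond} involve different operators with the \emph{same} step restriction $\DtFE$, so the telescoping estimate must bound a convex combination of forward-Euler steps in $F$ and forward-Euler steps in $\tF$; I would need to verify that both types of terms carry the factor $\Dt/r$ and that nonnegativity of \emph{both} $p_{ij}$ and $\tp_{ij}$ is what guarantees each grouped term $y_j + \tfrac{\Dt}{r}F(y_j)$ (respectively $y_j + \tfrac{\Dt}{r}\tF(y_j)$) is norm-nonincreasing under the hypotheses. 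Once this is set up, the equivalence between the existence of the nonnegative representation and the monotonicity property follows directly from the cited theorems, establishing that the SSP coefficient is precisely the supremal such $r$.
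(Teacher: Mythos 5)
Your overall strategy coincides with the paper's: the paper offers no self-contained proof of this Lemma, but simply notes that it follows from Theorems 3.5 and 3.6 of \cite{spijker2007} by viewing the pair $(F,\tF)$ as a doubled set of stage evaluations in Spijker's general framework---exactly the reduction you describe. So in spirit your proposal is the intended argument, and you correctly identify both ingredients: the convex-combination (telescoping) estimate for sufficiency, and Spijker's characterization for necessity.

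There is, however, a concrete error in your coefficient matching. You require $\mathbf{e}=(\mathbf{I}-\mathbf{P}-\tilde{\mathbf{P}})\mathbf{e}$, i.e.\ $(\mathbf{P}+\tilde{\mathbf{P}})\mathbf{e}=0$, and dismiss it as a benign consistency condition. Under the nonnegativity requirement \eqref{poscoeff} it is not benign: a nonnegative matrix with zero row sums is the zero matrix, so your correspondence can only represent the trivial method $\mathbf{S}=\tilde{\mathbf{S}}=0$. The resolution is that in the Shu--Osher-type form the coefficient multiplying $u^{n-1}$ is not $1$ but $v_i=1-\sum_j(p_{ij}+\tp_{ij})$, and the representation must additionally require $v_i\ge 0$. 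This is the normalization the paper actually uses: in the method \eqref{eq:optdwrk} the three coefficients of the first stage sum to exactly $1$, with $u^{n-1}$ carrying weight $2/(r(r-2))$, not $1$. With that normalization the matching is clean, since
\begin{align*}
(\mathbf{I}-\mathbf{P}-\tilde{\mathbf{P}})Y
  =(\mathbf{I}-\mathbf{P}-\tilde{\mathbf{P}})\mathbf{e}\,u^{n-1}
   +\frac{\Dt}{r}\left(\mathbf{P}F(Y)+\tilde{\mathbf{P}}\tF(Y)\right),
\end{align*}
so the $u^{n-1}$ term matches automatically and one reads off $\mathbf{S}=\frac{1}{r}(\mathbf{I}-\mathbf{P}-\tilde{\mathbf{P}})^{-1}\mathbf{P}$ and $\tilde{\mathbf{S}}=\frac{1}{r}(\mathbf{I}-\mathbf{P}-\tilde{\mathbf{P}})^{-1}\tilde{\mathbf{P}}$ with no extra constraint on $\mathbf{e}$. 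The nonnegativity of the vector $v=(\mathbf{I}-\mathbf{P}-\tilde{\mathbf{P}})\mathbf{e}$ is then precisely the additional positivity condition appearing in the cited theorems of \cite{spijker2007}, and it is also what your telescoping estimate silently needs: without $v_i\ge 0$ each stage is not a convex combination of forward-Euler-type terms and the norm bound fails. Once this normalization is corrected, the rest of your argument (and the appeal to Spijker's Theorems 3.5 and 3.6 for the converse direction) goes through as you outline.
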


In the remainder of this section, we consider explicit methods of the form
\eqref{dwrk_sspform}; i.e., those for which 
\begin{align} \label{erk}
p_{ij}=\tp_{ij}=0 \mbox{ for all } j\ge i.
\end{align}
Consider the application of a method satisfying \eqref{dwrk_sspform}-\eqref{erk}
to a linear problem with
\begin{align} \label{linode}
F(u) & = \mL u & \tF(u) & = \tL u,
\end{align}
where $\mL,\tL$ are fixed matrices,
and define $z=\Dt\mL,\tz=\Dt\tL$.  Then a straightforward calculation
shows that the solution can be written as $u_n = \psi(z,\tz)u_{n-1}$ where
\begin{align} \label{dwtaylor1}
\psi(z,\tz) & = \sum_{j=0}^s \sum_{l=0}^j \gamma_{jl} 
  \left(1+\frac{z}{r}\right)^{j-l} \left(1+\frac{\tz}{r}\right)^{l} 
  \quad \quad \mbox{with } \gamma_{jl} \ge 0 \mbox{ if } 0\le r\le \tc.
\end{align}

Theorem \ref{dwrmaxthm} follows by considering the conditions for
first-order accuracy of the method and positivity of the coefficients
in the form \eqref{dwtaylor1}.
\begin{proof}[Proof of Theorem \ref{dwrmaxthm}.]
For a given downwind Runge--Kutta method, consider \eqref{dwtaylor1} with $r=\tc$.
Since $\tF(u)\approx-F(u)$, the method approximates the 
true solution to order $p$ if 
$$\psi(z,-z)=\exp(z)+\Oop(z^{p+1}).$$
In terms of the coefficients $\gamma_{jl}$, the conditions for consistency and
first order accuracy are:
\begin{align*}
& \sum_{j=0}^s \sum_{l=0}^j \gamma_{jl} = 1,     \\
& \sum_{j=0}^s \sum_{l=0}^j \gamma_{jl} \left(j-2l\right) = \tc.
\end{align*}
Since the $\gamma$'s are positive, we have
\begin{align*}
\tc & = \sum_{j=0}^s \sum_{l=0}^j \gamma_{jl} \left(j-2l\right)
   \le \max_{j\in(0,s),l\in(0,j)} \left(j-2l\right) 
   = s.
\end{align*}
\end{proof}

\begin{rem}
It is possible to show in a similar way that the bound $\tc\le s$ holds even 
for the broad class of general linear methods considered in \cite{ketcheson2009a}.
\end{rem}

\begin{rem}
In \cite{ketchesonphdthesis}, tighter bounds on $\tc$ were computed
for specific classes of downwind Runge--Kutta methods.  The same approach
used there could be applied to find tighter bounds for classes of
general linear methods.
\end{rem}

\section{Implicit linear multistep methods\label{sect:dwilmm}}
  Lenferink \cite{lenferink1991} showed that the SSP coefficient $\sspcoef$
is no greater than two for implicit linear multistep methods of order greater
than one.  This 
bound was shown to hold in an even more general sense in \cite{hundsdorfer2005}.
In this section we consider the downwind SSP coefficient $\tc$ for 
implicit downwind linear multistep methods
\eqref{eq:add_lmm_dw}.  Our main result, presented already as Theorem
\ref{thm:dwlmm}, states that the same bound proved by Lenferink also 
holds for downwind methods.
Even though this is a more general result than those of 
\cite{lenferink1991,hundsdorfer2005}, we provide a simpler proof.

The method \eqref{eq:add_lmm_dw} is accurate to order $p$ if
\begin{align} \label{oc1}
\sum_{j=0}^{k-1} \alpha_j j^i + \sum_{j=0}^k (\beta_j -\tbeta_j) i j^{i-1} & = k^i & (0\le i\le p)
\end{align}
and has downwind SSP coefficient $\tc\ge r$ if and only if
\begin{subequations} \label{eq:x}
\begin{align} 
\label{subeq1}
\beta_j, \tbeta_j & \ge 0 & (0\le j \le k-1) \\
\alpha_j - r (\beta_j +\tbeta_j) & \ge 0 & (0\le j \le k-1).
\end{align}
\end{subequations}

The following lemma, which appeared in \cite{ketcheson2009a}, 
facilitates the proof of Theorem \ref{thm:dwlmm}.  We say a method
\eqref{eq:add_lmm_dw} of $k$ steps and order $p$
is {\em optimal} if there exists no other method
of at most $k$ steps and order at least $p$ with larger downwind SSP
coefficient $\tc$.

\begin{lem} \label{betalem}
Any optimal downwind linear multistep method \eqref{eq:add_lmm_dw} has the 
property that $\beta_j\tbeta_j=0$ for each $j$.
\end{lem}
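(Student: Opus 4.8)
The proof rests on a structural asymmetry between the two families of conditions: the order conditions \eqref{oc1} involve the downwind coefficients only through their \emph{differences} $\beta_j-\tbeta_j$, whereas each monotonicity constraint in \eqref{subeq1} involves them only through their \emph{sum} $\beta_j+\tbeta_j$, via the inequality $\alpha_j-r(\beta_j+\tbeta_j)\ge 0$. The plan is to exploit this by a cancellation argument: whenever both $\beta_m$ and $\tbeta_m$ are positive at some index $m$, one can shrink the sum $\beta_m+\tbeta_m$ while leaving the difference $\beta_m-\tbeta_m$ untouched. This preserves the order of the method and can only relax the monotonicity constraints, so the original method cannot have been optimal.

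Concretely, I would suppose for contradiction that an optimal method of order $p$ with downwind SSP coefficient $\tc=r$ satisfies $\beta_m\tbeta_m>0$ for some $m$, and set $\mu:=\min(\beta_m,\tbeta_m)>0$. First I construct a competitor by the substitution $\beta_m\mapsto\beta_m-\mu$, $\tbeta_m\mapsto\tbeta_m-\mu$, with every other coefficient (all $\alpha_j$, and all coefficients at indices $\ne m$) left fixed; this forces one of $\beta_m,\tbeta_m$ to vanish. The two invariances to check are routine: nonnegativity $\beta_m,\tbeta_m\ge 0$ survives because we subtracted the minimum, and every equation in \eqref{oc1} is unchanged because $\beta_m-\tbeta_m$ is unchanged. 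The only altered constraint in \eqref{eq:x} is the one at index $m$, whose slack $\alpha_m-r(\beta_m+\tbeta_m)$ increases by $2r\mu>0$; all others are literally identical.

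The remaining, and most delicate, step is to turn this increased slack into a genuine contradiction with optimality rather than merely producing a second, equally good method. If index $m$ carries a binding constraint — one of those for which $\alpha_m-r(\beta_m+\tbeta_m)=0$ and which therefore pins $\tc=r$ via $r=\min_j \alpha_j/(\beta_j+\tbeta_j)$ over indices with positive sum — then the competitor admits a strictly larger admissible $r$, the desired contradiction. I expect this activeness bookkeeping to be the main obstacle, since a priori the offending index might carry a slack constraint, in which case the single cancellation does not immediately raise $\tc$. I would dispose of this by cancelling \emph{simultaneously} at every index with both coefficients positive: the resulting method still has order $p$, satisfies \eqref{eq:x} for $r$, and obeys $\beta_j\tbeta_j=0$ for all $j$, so by optimality it also attains $\tc=r$; one then argues that no optimal method can be binding at an index where both coefficients are positive, which forces the claimed identity at every $j$.
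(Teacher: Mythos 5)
Your proposal is essentially the paper's own proof: it exploits exactly the same asymmetry (the order conditions \eqref{oc1} see only $\beta_j-\tbeta_j$ while the constraints \eqref{eq:x} see only $\beta_j+\tbeta_j$) and performs the same cancellation $\beta_j\mapsto\beta_j-\min(\beta_j,\tbeta_j)$, $\tbeta_j\mapsto\tbeta_j-\min(\beta_j,\tbeta_j)$ to contradict optimality. The ``activeness bookkeeping'' you rightly flag as the delicate step is in fact glossed over in the paper, which simply asserts that the modified coefficients admit a strictly larger $\tc$; your version is, if anything, the more careful one.
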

\begin{proof}[Proof of Lemma \ref{betalem}]
Note that the order conditions \eqref{oc1} depend only on 
the difference
$\beta_j-\tbeta_j$, while the inequality constraint \eqref{subeq1} can be 
written as (setting $r=\tc$)
$$\tc \le \alpha_j/(\beta_j+\tbeta_j).$$  Suppose that an optimal method has
$\beta_j>\tbeta_j>0$ for $j\in J_1 \subset(0,1,\dots,k-1)$ and
$\tbeta_j>\beta_j>0$ for $j\in J_2 \subset(0,1,\dots,k-1)$.
Then for $j\in J_1$ define $\beta^*_j=\beta_j-\tbeta_j$ and $\tbeta^*_j=0$;
for $j\in J_2$ define $\tbeta^*_j=\tbeta_j-\beta_j$ and $\beta^*_j=0$.  Then
the coefficients obtained by replacing $\beta,\tbeta$ with $\beta^*,\tbeta^*$
satisfy \eqref{eq:x} and \eqref{oc1} with a larger value of
$\tc$, which is a contradiction. 
\end{proof}

We now prove our main result on linear multistep methods.  Similar
to the proof of Theorem \ref{dwrmaxthm}, this result follows from
combining the order conditions and the positivity of the coefficients.

\begin{proof}[Proof of Theorem \ref{thm:dwlmm}]
It is sufficient to prove that $\tc\le 2$ for any optimal method.
Hence we consider an optimal method that is at least second order accurate.
Applying Lemma \ref{betalem}, we define $\beta_j^*=\beta_j + \tbeta_j$ and
$\sigma_j = \sgn(\beta_j - \tbeta_j)$ so that 
$\beta_j-\tbeta_j = \sigma_j\beta_j^*$.  Additionally define 
$\delta_j = \alpha_j - r \beta_j^*$.  Then the order conditions for
order two can be written (taking $r=\tc$)
\begin{subequations}
\begin{align}
& \sum_{j=0}^{k-1} \delta_j + \tc \beta_j^* = 1 \label{lmmoc0} \\
& \sum_{j=0}^{k-1} j\delta_j + (\tc j + \sigma_j) \beta_j^* 
        = k-\sigma_k\beta_k^* \label{lmmoc1} \\
& \sum_{j=0}^{k-1} j^2\delta_j + (\tc j^2 + 2j\sigma_j) \beta_j^* 
        = k(k-2\sigma_k\beta_k^*). \label{lmmoc2}
\end{align}
\end{subequations}
Taking $-k^2\times$\eqref{lmmoc0}$+2k\times$\eqref{lmmoc1}$-$\eqref{lmmoc2}
gives
\begin{align}
\sum_{j=0}^{k-1} -(k-j)^2\delta_j + \left(-\tc(k-j)^2 + 2\sigma_j(k-j)\right) \beta_j^*=0.
\end{align}
Since all coefficients $\delta_j,\beta_j^*$ are non-negative, at least
one of the terms multiplying them must be non-negative, for some value of
$j$ (else the sum would be negative).  Since the terms multiplying 
$\delta_j$ are all negative, this implies that
$$-\tc(k-j)^2 + 2\sigma_j(k-j)\ge0$$
for some $j$; i.e.
$$\tc \le \max_{j\in[0,k-1]} \frac{2\sigma_j(k-j)}{(k-j)^2} \le 2.$$
\end{proof}

\begin{rem}
Using the order conditions and the SSP conditions, it is straightforward
to determine (for any particular $k,p$) the methods with largest 
$\tc$.  This was done in \cite{ketcheson2009a}.
\end{rem}

\section{Implicit Runge--Kutta methods\label{sect:dwirk}}
  It is well known (see, e.g. \cite{spijker1983,kraaijevanger1991,gottlieb2009})
that (implicit) Runge-Kutta methods of order higher than one cannot have
$\sspcoef=\infty$.  Furthermore, it is conjectured that they cannot have
$\sspcoef>2s,$ where $s$ is the number of stages \cite{ferracina2008,ketcheson2009}.

In \cite{higueras2005a}, the following question was posed: do there exist
high order implicit downwind Runge-Kutta methods with $\tc=\infty$?  
Although we have not answered this question directly, we have found downwind
methods with arbitrarily large $\tc$.  One class of such methods is the two-stage, 
second-order family
\begin{subequations} \label{eq:optdwrk}
\begin{align}
y_1 & = \frac{2}{r(r-2)} u^{n-1} 
         + \frac{2}{r}\left(y_1+\frac{\Dt}{r}F(y_1)\right)
         + \frac{r^2-4r+2}{r(r-2)}\left(y_2+\frac{\Dt}{r}\tF(y_2)\right) \\
y_2     & = y_1 + \frac{\Dt}{r}F(y_1) \\
u^{n} & = y_2 + \frac{\Dt}{r}F(y_2).
\end{align}
\end{subequations}
Here we have written the method in the form \eqref{dwrk_sspform} so that
the downwind SSP coefficient is apparent: $\tc=r$, for any $r>2+\sqrt{2}$.
It can be shown that the method \eqref{eq:optdwrk} is A-stable (i.e., 
unconditionally stable when $F(u)=-\tF(u)=\lambda u$ with $\Re(\lambda)\le0$).

Numerical searches indicate that other two-stage, second-order methods with 
large $\tc$ exist.  An obvious drawback of
method \eqref{eq:optdwrk} is that it is fully implicit.  However, a search for
two-stage, second-order diagonally implicit methods yielded no results.

For reference, we include here also the coefficients of the method when
written in form \eqref{eq:add-runge-kutta}:
\begin{align} \label{pbc}
\begin{array}{c|cc|cc} 
  \frac{r-2}{r} & \frac{r^2-2r-2}{2r} & 0 & 0 & \frac{r^2-4r+2}{2r} \\
  \frac{r-1}{r} & \frac{r-2}{2}       & 0 & 0 & \frac{r^2-4r+2}{2r} 
              \\ \hline & \frac{r-2}{2} & \frac{1}{r} & 0 & \frac{r^2-4r+2}{2r} \end{array}.
\end{align}
The left part of \eqref{pbc} corresponds to the usual Butcher coefficients,
and the rightmost part displays the additional coefficients $\ta_{ij},\tb_j$.
In the case $F=-\tF$, the method reduces to an ordinary Runge-Kutta method; we
refer to this method as the {\em underlying method}:
\begin{align} \label{pbcu}
\begin{array}{c|cc} 
  \frac{r-2}{r} & \frac{r^2-2r-2}{2r} & -\frac{r^2-4r+2}{2r} \\
  \frac{r-1}{r} & \frac{r-2}{2}       & -\frac{r^2-4r+2}{2r} 
              \\ \hline & \frac{r-2}{2} & \frac{4-r}{2} \end{array}.
\end{align}
Although the downwind method \eqref{pbc} will in general behave differently from the underlying
method \eqref{pbcu}, analysis of the latter may still give useful insight.
Straightforward calculation shows that the stability function of method \eqref{pbcu} is
\begin{align} \label{stabfunc}
\psi(z) & = \frac{1 + \frac{2}{r} z + \frac{1}{r^2} z^2}{1 - \frac{r-2}{r} z + \frac{r^2-4r+2}{2r^2}z^2}.
\end{align}
Further calculation reveals that the method is A-stable since $|\psi(z)|\le 1$ for all $z$ with
non-positive real part.  It is clearly not L-stable, but it is nearly so for large $r$ since
\begin{align*}
\lim_{|z| \to \infty} \psi(z) & = \frac{2}{r^2 - 4r + 2}
\end{align*}
is quite small for large $r$.


\section{Numerical tests}
In this section we conduct a few numerical tests to study the accuracy of the family of
methods introduced in the last section.  We compare the backward Euler method,
the implicit trapezoidal Runge-Kutta method, and the downwind Runge-Kutta
method \eqref{eq:optdwrk}.  In all tests we take $r=8$.

In the first test we will see that the temporal error for the downwind method includes
a term related to the spatial discretizations.  Consequently, some care must be
taken when choosing the spatial discretizations.  The second and third tests
demonstrate that the method performs well when appropriate spatial discretizations 
are chosen.

\subsection{First-order upwind advection}
As a first test we solve the advection equation \eqref{eq:advect} using the
upwind and downwind differences \eqref{eq:advupdown} with $\Dx=1/128$.  We
consider the domain $0\le x\le 1$ with periodic boundary conditions and initial
condition $u(x,0)=1-H(x-1/2)$ where $H(x)$ is the Heaviside function.  In
Figure \ref{fig:squareadvect} we plot the solutions at $t=1$.  

Figure \ref{figa} shows results obtained with $\Dt=0.9\Dx$.  As expected,
backward Euler is more dissipative than the second order trapezoidal method.
Surprisingly, the (second order accurate) downwind Runge--Kutta method is more 
dissipative than even the first-order backward Euler method.
Figure \ref{figb} shows results obtained with $\Dt=8.0\Dx$, the SSP limit
for the downwind method.  In this case, the trapezoidal method, which has SSP
coefficient $\sspcoef=2$, generates overshoots.  The backward Euler method
is much more dissipative for this large CFL number.  The downwind method
is more accurate than backward Euler and maintains maximum norm monotonicity,
but the high level of dissipation exhibited is still disappointing.

\begin{figure}
\subfigure[CFL=0.9\label{figa}]{\includegraphics[width=3in]{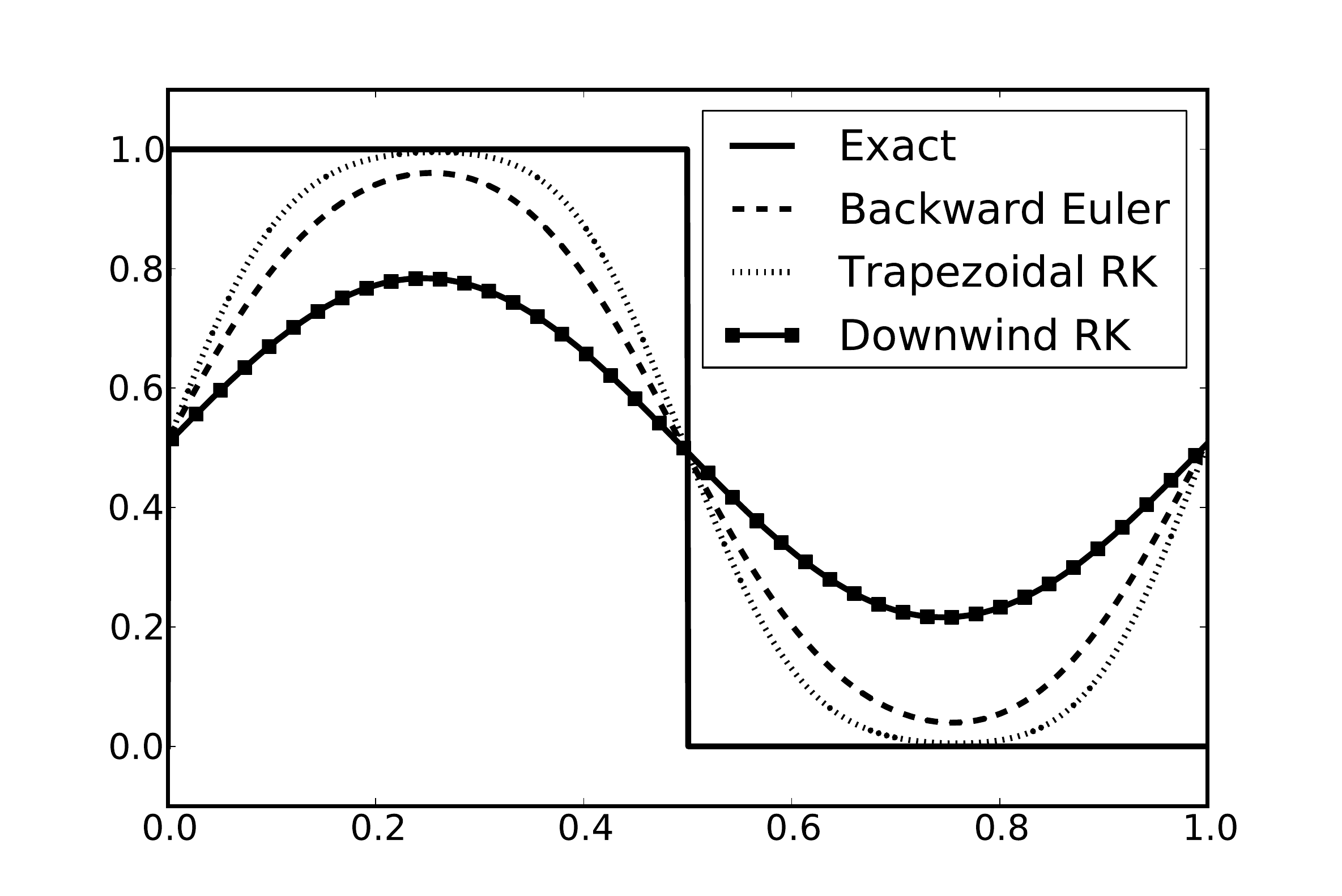}}
\subfigure[CFL=8.0\label{figb}]{\includegraphics[width=3in]{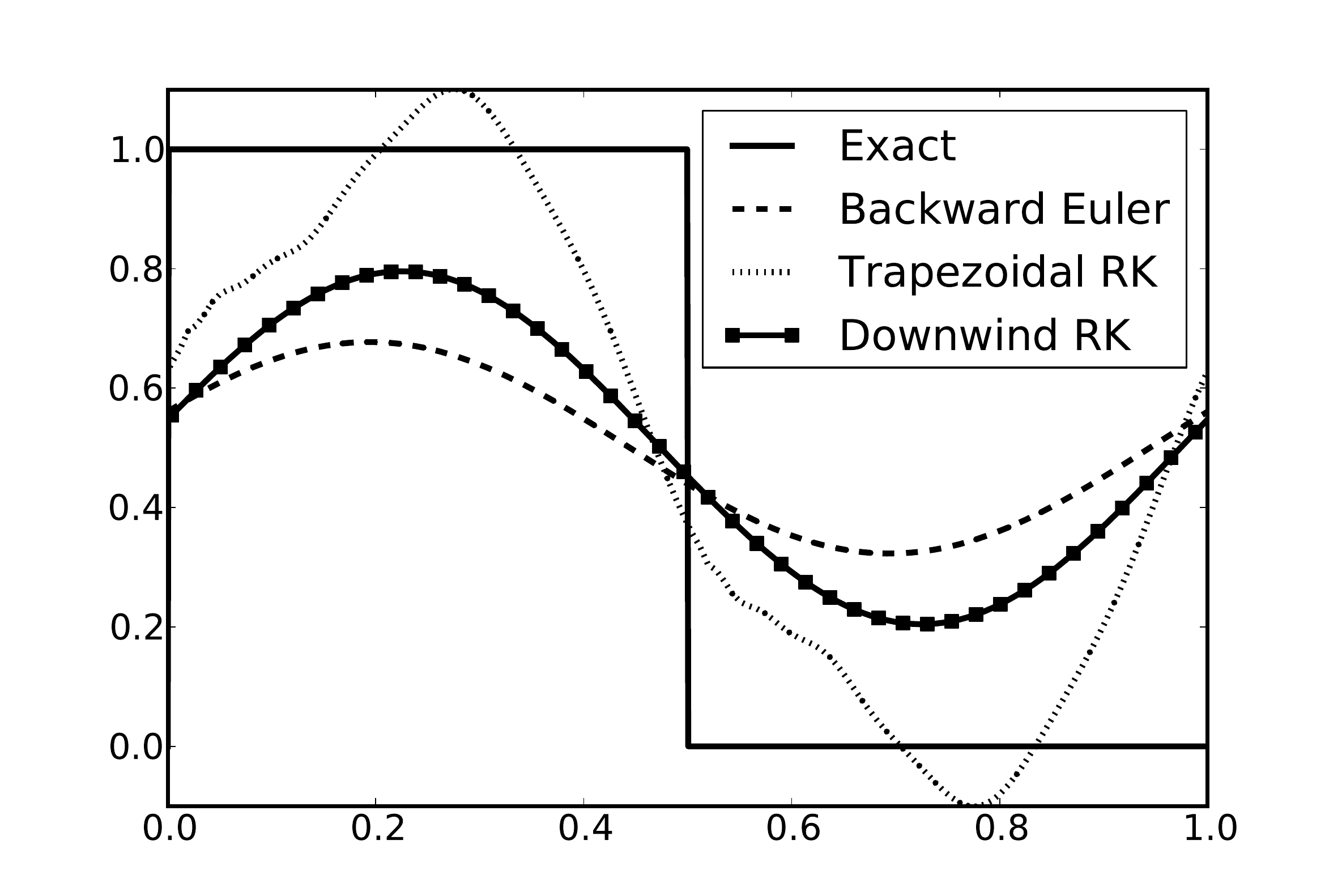}}
\caption{Square wave advection with the downwind Runge-Kutta method 
\eqref{eq:optdwrk}, compared to two standard methods.\label{fig:squareadvect}}
\end{figure}

The behavior of the downwind method can be understood through the following 
analysis.  Write the upwind and downwind discretizations as 
$F(u)=\mL u, \tF(u)=-\tL u$, where $\mL,\tL$ are the matrices
\begin{align}
\mL & = \frac{1}{\Dx}\begin{pmatrix}
            -1 & & & 1\\
            1 & -1 \\
            & \ddots & \ddots \\
            & & 1 & -1
        \end{pmatrix} &
\tL & = \frac{1}{\Dx}\begin{pmatrix}
            1 & -1 & & \\
            & 1 & -1 \\
            & & \ddots & \ddots \\
            -1 & & & 1
        \end{pmatrix}
\end{align}
Any downwind Runge--Kutta method applied to this problem 
results in a recurrence of the form (see section 3 above)
\begin{align}
u^n & = \psi(\Dt \mL, \Dt \tL) u^{n-1}.
\end{align}
For the method \eqref{eq:optdwrk}, we find
\begin{align*}
\psi(\Dt\mL,\Dt\tL) & = \left(1+\frac{z}{r}\right)^2 
            \left(1+\frac{r^2-4r+2}{2r}\Dt\tL - \frac{r^2-2r-2}{2r}\Dt\mL
                    - \frac{r^2-4r+1}{2r^2}\Dt^2 \mL\tL\right)^{-1} \\
    & = 1 + \Dt \mL + \frac{r^2-4r+2}{2r}\Dt\left(\mL - \tL\right) + \Oop(\Dt^2).
\end{align*}
Since $\mL u \approx -u_x$, the first two terms ensure that the method is at
least first order accurate.  However, the term involving $\left(\mL - \tL\right)$ is problematic, since
\begin{align}
\mL - \tL & = \frac{1}{\Dx}\begin{pmatrix}
            -2 & 1 & & 1\\
            1 & -2 & 1 \\
            & \ddots & \ddots & \ddots \\
            1 & & 1 & -2
        \end{pmatrix}
        \approx \Dx \ u_{xx}.
\end{align}

Thus, if $F,\tF$ are spatial discretizations with order of accuracy $q$,
then the one-step error for the downwind Runge--Kutta method \eqref{eq:optdwrk}
will contain a diffusive term of $\Oop(r\Dx^q\Dt)$.  In order to avoid loss of 
accuracy, the spatial discretization should ensure that $r\Dx^q\lessapprox \Dt$,
so that this term is no larger than the $\Oop(\Dt^2)$ term.

\subsection{WENO advection}
To demonstrate that proper accuracy is acheived when the foregoing condition is
satisfied, we use a fifth-order WENO interpolation to determine the fluxes.  
Table \ref{tbl:conv}
shows convergence results in the maximum norm for advection of a sine wave
($u(x,0)=\sin(2\pi x)$ using a CFL number of 8.  Notice that the downwind
method achieves its design order of 2, and gives accuracy similar to the
trapezoidal Runge--Kutta method.  
It should be noted that the downwind method is more expensive
computationally, since it is not diagonally implicit and requires both upwind
and downwind operator evaluations. However, the purpose of this
test is only to confirm the theoretically predicted accuracy.

\begin{table}
\center
\begin{tabular}{l|ccc}
N  & Backward Euler & 2nd order RK & Downwind RK \\ \hline
32 & 0.728& 0.730& 0.436\\
64 & 0.603& 0.215 & 0.168\\
128 & 0.452& 0.054 & 0.043\\
256 & 0.292& 0.013& 0.011\\
\end{tabular}
\caption{Max-norm errors for advection of a sine wave using fifth-order WENO
discretization and a CFL number of 8.\label{tbl:conv}}
\end{table}

\subsection{Burgers equation}
In order to provide an initial assessment of the suitability of these
methods for application to nonlinear hyperbolic conservation laws, consider
Burgers equation:
$$U_t + \left(\frac{1}{2} U^2\right)_x = 0$$
on the unit interval with initial condition $U(x,0)=\sin(2\pi x)$ and periodic boundary
conditions.  Fifth-order WENO interpolation is again used to determine the fluxes.
We consider the solution at time $t=0.16$, just after a shock has formed.  
Figure \ref{burgers65} shows a closeup around the shock for the exact solution
(obtained using characteristics) along with solutions computed by each of the
methods compared above, using CFL number $6.5$ in each case.  A grid with $\Dx=1/512$
points is used.  As expected, the
backward Euler method is much more dissipative than the second order methods.
Interestingly, the trapezoidal method solution does not show oscillations {\em
per se}, but clearly has an unphysical bump behind the shock.  The downwind 
method is by far the most accurate.  Again, it should be noted that the downwind
method is more expensive, since it involves an implicit solve of twice as many equations.

\begin{figure} \begin{center}
\includegraphics[width=4in]{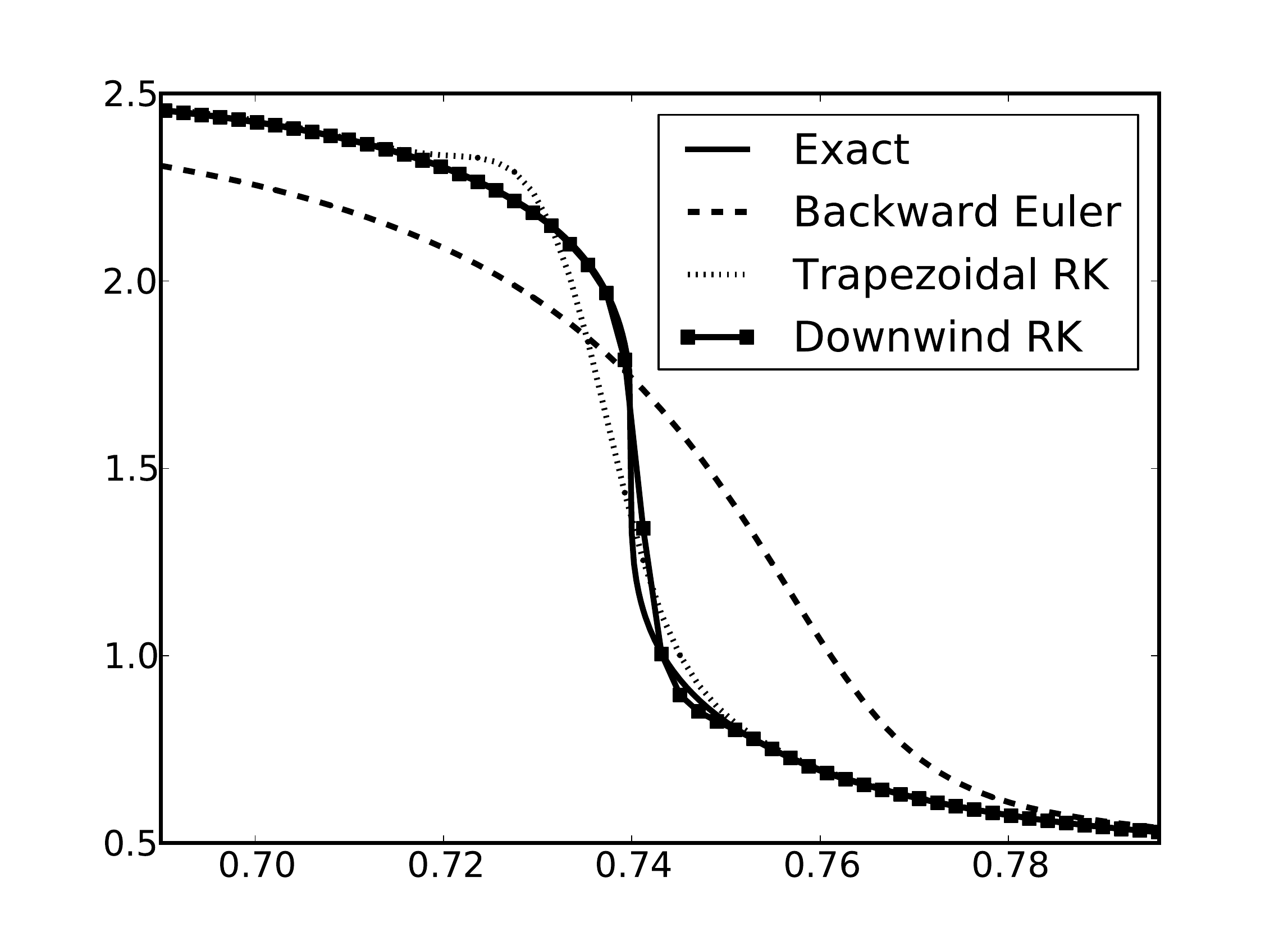}
\caption{Closeup view of shock in solution of Burgers equation with various methods using a CFL number of 6.5.\label{burgers65}}
\end{center}
\end{figure}

Figure \ref{burgerscfl} shows a comparison of two solutions obtained with the downwind
RK method \ref{eq:optdwrk} using two different CFL numbers.  The solutions are almost
indistinguishable, suggesting that the spatial error is dominant, as one might expect. 
Close inspection reveals that the solution using the larger CFL
number is more accurate.  Importantly, it seems that the method does not become more
dissipative at large CFL numbers.

\begin{figure} \begin{center}
\includegraphics[width=4in]{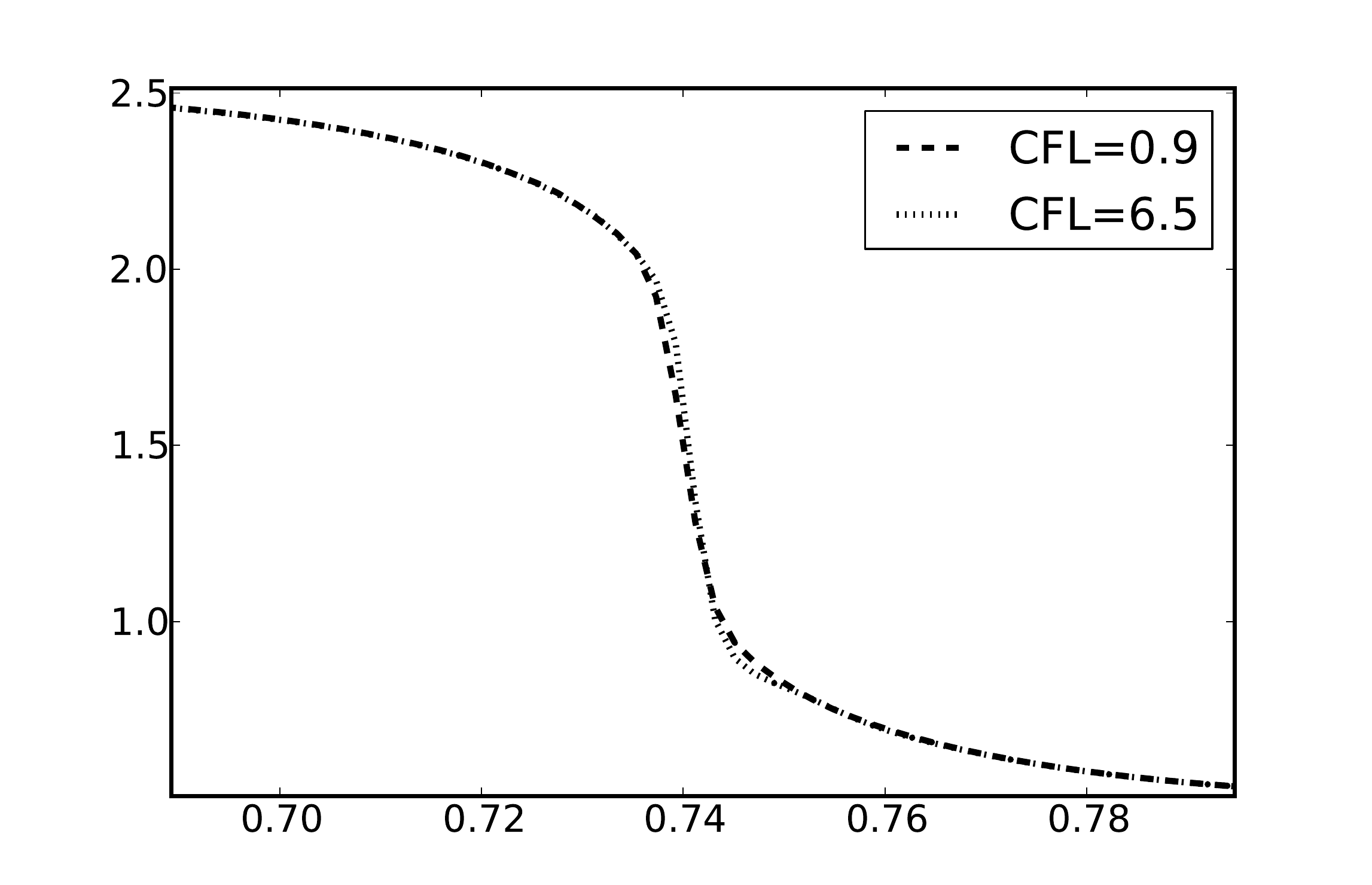}
\caption{Closeup view of shock in solution of Burgers equation obtained with the 
downwind RK method \eqref{eq:optdwrk} with $r=8$
and using two different CFL numbers.\label{burgerscfl}}
\end{center}
\end{figure}

We note in passing that solution of the nonlinear system of equations for 
implicit WENO schemes is a significant challenge, especially for large CFL number and
in the presence of shocks.
The CFL number 6.5 used here allowed the use of an easily accessible nonlinear solver,
namely the \verb newton_krylov  and \verb fsolve  functions of the SciPy package.
Efficient solution of the nonlinear system for larger CFL numbers is an area for
future research.




\section{Discussion\label{sect:discuss}}
  The new bounds on the downwind SSP coefficient proven here for 
explicit Runge--Kutta methods and implicit linear multistep methods
are disappointing, since they indicate that nothing can be gained
-- in an asymptotic sense -- by including downwind-biased discretizations
for integrators in these classes.
Previous attempts to find methods that have large SSP coefficients have
been similarly disappointing \cite{macdonald2007,ketcheson2009}.  

The family of second order implicit downwind Runge--Kutta methods we report
here is therefore quite remarkable.  These are the first methods known to have
large SSP coefficient and to provide higher than first order accuracy in
practice for CFL numbers larger than unity.  They may be useful for problems
with hyperbolic components requiring large time steps and the avoidance of
spurious oscillations.

In light of this discovery, the following questions are naturally of interest:
\begin{itemize}
  \item Do higher (than 2nd) order downwind Runge--Kutta methods exist with large
        downwind SSP coefficient?
  \item Do diagonally implicit downwind Runge--Kutta methods exist with
        large SSP coefficient?
  \item Can these methods be implemented efficiently in combination with high 
        order space discretizations and large time steps, for problems with shocks?
\end{itemize}
Preliminary investigation of the first question indicates the answer is affirmative.
Work on all three questions is ongoing and will be presented elsewhere.

\bibliography{dwssp}

\end{document}